\newtheorem{thm}{Theorem}[section]
\newtheorem{lem}[thm]{Lemma}
\newcommand{\ch }{\mathop{\rm char}\nolimits}
\newcommand{\Spec }{\mathop{\rm Spec}\nolimits}
\newcommand{\id }{{\rm id}}
\newcommand{\degw }{\mathop{\deg _{\w }}\nolimits}
\newcommand{\Aut }{\mathop{\rm Aut}\nolimits}
\newcommand{\zs}{\{ 0\} }
\newcommand{\sm}{\setminus}
\newcommand{\G}{{\bf G}}
\newcommand{\Ga}{{\bf G}_a}
\newcommand{\R}{{\bf R}}
\newcommand{\Z}{{\bf Z}}
\newcommand{\w}{{\bf w}}
\begin{document}

\title{The automorphism theorem and 
additive group actions on the affine plane}

\author{Shigeru Kuroda
\thanks{Partly supported by JSPS KAKENHI 
Grant Number 15K04826.}}

\date{}

\footnotetext{2010 {\it Mathematics Subject Classification}. 
Primary 14R10; Secondary 13A50, 14R20. }

\maketitle

\begin{abstract}
Due to Rentschler, Miyanishi and Kojima, 
the invariant ring for a 
$\Ga $-action on the affine plane over an arbitrary field 
is generated by one coordinate. 
In this note, 
we give a new short proof for this result 
using the automorphism theorem of Jung and van der Kulk. 
\end{abstract}

\section{Introduction}
\setcounter{equation}{0}

Let $k$ be a field, 
$A$ a $k$-domain, 
$A[T]$ the polynomial ring in one variable over $A$, 
and $\sigma :A\to A[T]$ a homomorphism of $k$-algebras. 
Then, 
$\sigma $ defines an action of the additive group 
$\Ga =\Spec k[T]$ on $\Spec A$ 
if and only if the following holds for each $a\in A$, 
where we write 
$\sigma (a)=\sum _{i\ge 0}a_iT^i$ with $a_i\in A$, 
and $U$ is a new variable: 

\smallskip 

\quad 
(A1) $a_0=a$. 
\qquad 
(A2) $\sum _{i\ge 0}\sigma (a_i)U^i=\sum _{i\ge 0}a_i(T+U)^i$ 
in $A[T,U]$. 

\smallskip 

\noindent 
If this is the case, 
we call $\sigma $ a $\Ga $-{\it action} on $A$. 
The $\sigma $-{\it invariant ring} 
$A^{\sigma }:=\{ a\in A\mid \sigma (a)=a\} $ 
is equal to $\sigma ^{-1}(A)$ by (A1). 
We say that $\sigma $ is {\it nontrivial} 
if $A^{\sigma }\ne A$.

Now, 
let $k[x_1,x_2]$ 
be the polynomial ring in two variables over $k$, 
and $\Aut _kk[x_1,x_2]$ the automorphism group 
of the $k$-algebra $k[x_1,x_2]$. 
We often express $\phi \in \Aut _kk[x_1,x_2]$ 
as $(\phi (x_1),\phi (x_2))$. 
We call $f\in k[x_1,x_2]$ a {\it coordinate} of 
$k[x_1,x_2]$ if there exists $g\in k[x_1,x_2]$ 
such that $(f,g)$ belongs to $\Aut _kk[x_1,x_2]$, 
that is, 
$k[f,g]=k[x_1,x_2]$.

The following theorem is a fundamental result 
for $\Ga $-actions on $k[x_1,x_2]$.

\begin{thm}\label{thm:Ga}
For every nontrivial $\G _a$-action 
$\sigma $ on $k[x_1,x_2]$, 
there exists a coordinate $f$ of $k[x_1,x_2]$ 
such that $k[x_1,x_2]^{\sigma }=k[f]$. 
\end{thm}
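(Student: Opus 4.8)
The plan is to use the theorem of Jung and van der Kulk to change coordinates so that $\sigma $ becomes \emph{triangular}, and then to compute $R:=k[x_1,x_2]^{\sigma }$ directly from the cocycle axioms (A1), (A2). It is convenient to record first the soft remark that $R$ is factorially closed in $k[x_1,x_2]$: if $a,b\in k[x_1,x_2]$ satisfy $0\ne ab\in R$, then $\sigma (a)\sigma (b)=ab$ has $T$-degree $0$, hence $\deg _T\sigma (a)=\deg _T\sigma (b)=0$ and so $a,b\in R$ by (A1). Consequently $R$ is a UFD with $R^{\times }=k^{\times }$, is algebraically closed in $k[x_1,x_2]$, and has $\trd _kR=1$ (since $\sigma $ is nontrivial and $\G _a$-orbits have dimension at most $1$); the triangular computation below will in any case determine $R$ on the nose, so these facts are used mainly for orientation.

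The central step is the triangularization. A $\G _a$-action is the same datum as a one-parameter subgroup of $\Aut _kk[x_1,x_2]$: after enlarging $k$ to an infinite field if necessary, the automorphisms $\Phi _t:=(\sigma (x_1)|_{T=t},\sigma (x_2)|_{T=t})$, $t\in k$, form a subgroup closed under composition (by (A2)), and they have uniformly bounded degree $\deg \Phi _t\le \max (\deg \sigma (x_1),\deg \sigma (x_2))$ in $x_1,x_2$. By the automorphism theorem, $\Aut _kk[x_1,x_2]$ is the amalgamated free product of the affine subgroup $\Aff _2(k)$ and the de Jonqui\`eres (triangular) subgroup over their intersection, and hence acts on the associated Bass--Serre tree; since the length of a tame automorphism is bounded in terms of the logarithm of its degree, $\{\Phi _t\}_t$ has bounded orbits on this tree, so the subgroup it generates fixes a vertex and is therefore conjugate into $\Aff _2(k)$ or into the de Jonqui\`eres subgroup. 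After that conjugation (and, in the affine case, a further affine one), $\sigma (x_1)\in k[x_1][T]$ and $\sigma (x_2)\in k[x_1,x_2][T]$; because $\mathrm{Hom}(\G _a,\G _m)=0$ the multiplicative parts of this triangular automorphism are trivial, so $\sigma (x_1)=x_1+\beta (T)$ and $\sigma (x_2)=x_2+p(x_1,T)$ with $\beta $ an additive polynomial in $T$ and $p(x_1,0)=0$.

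Granting this, $R$ falls out by a short computation. If $\beta =0$, then $p\ne 0$ (else $\sigma $ is trivial), and (A2) applied to $x_2$ forces the top power of $x_2$ in any invariant to be divisible by $\ch k$ — so if $\ch k=0$ every invariant already lies in $k[x_1]$, and if $\ch k=q>0$ one iterates, forcing the $x_2$-degree to be divisible by arbitrarily large powers of $q$, again landing in $k[x_1]$; hence $R=k[x_1]$, and $x_1$ is a coordinate. If $\beta \ne 0$, then solving the recursion that (A2) imposes on the coefficients of $p$ rewrites $p(x_1,T)$ through a single one-variable polynomial $q$, so that $\sigma (x_2-q(x_1))=x_2-q(x_1)$; thus $f:=x_2-q(x_1)$ is a coordinate since $(x_1,f)\in \Aut _kk[x_1,x_2]$, and in the coordinates $(x_1,f)$ the action is a translation in $x_1$ fixing $f$, so $R=k[f]$. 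The step I expect to be the main obstacle is the triangularization of the second paragraph, and in particular making it work over an arbitrary base field: when $k$ is finite or imperfect the family $\{\Phi _t\}_t$ no longer determines $\sigma $, so the tree argument must be run over $k(T)$ (or over $\bar k$) and its conclusion descended back to $k$, and it is precisely in controlling this — together with the positive-characteristic book-keeping, where $\beta $ is a genuine additive polynomial and the recursion for $p$ is governed by Hasse derivatives rather than ordinary antiderivatives — that the full amalgamated-product structure of $\Aut _kk[x_1,x_2]$ is needed, not merely that it is generated by affine and elementary automorphisms.
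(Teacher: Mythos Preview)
Your route is genuinely different from the paper's, and is essentially the group-theoretic argument the paper explicitly sets aside (the reference to Kambayashi~\cite{Kam} in the introduction). The paper does \emph{not} triangularize $\sigma$. Instead it supposes no coordinate is invariant, picks $(f_1,f_2)\in\Aut_kk[x_1,x_2]$ minimizing a lexicographic $\Z^2$-degree of $(\sigma(f_1),\sigma(f_2))$, and then applies Theorem~\ref{thm:autom} to the concrete automorphisms $\sigma_{g^r}=(q_1(g^r),q_2(g^r))$ for a fixed nonconstant invariant $g$ and growing $r$. The resulting leading-term relation $\bar q_{i,m_i}=\alpha\bar q_{j,m_j}^{\,l}$ with $m_i=lm_j$ lets one replace $f_i$ by $f_i-\alpha f_j^l$ and strictly drop the $\Z^2$-degree, a contradiction. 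No amalgam, no tree, no base change; the point of the paper's argument is precisely that it works uniformly over every $k$.

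Your proposal has two genuine gaps. The first you already flag: over a finite $k$ the specializations $\Phi_t$ do not determine $\sigma$, so the Bass--Serre fixed-point step must be run over an extension and the conjugating automorphism descended back to $k$; you do not carry this out, and it is not automatic. The second is in your post-triangularization computation when $\beta\ne 0$ in positive characteristic: it is \emph{false} in general that one can find $q$ with $\sigma(x_2-q(x_1))=x_2-q(x_1)$. Take $\ch k=p$, $\sigma(x_1)=x_1+T^p$, $\sigma(x_2)=x_2+T$; this satisfies (A1) and (A2), but $q(x_1+T^p)-q(x_1)$ is a polynomial in $T^p$ and can never equal $T$, so no such $q$ exists. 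The invariant coordinate here is $x_1-x_2^{\,p}$, which is not of the form $x_2-q(x_1)$ at all. So even granting triangularization, locating the invariant coordinate in characteristic $p$ requires more than the recursion you sketch.
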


This theorem was first proved by Rentschler~\cite{Rentschler} 
when $\ch k=0$ in 1968, 
and then by Miyanishi~\cite{Miyanishi nagoya} 
when $k$ is algebraically closed in 1971. 
Recently, 
Kojima~\cite{Kojima} proved the general case 
by making use of Russell-Sathaye~\cite{RS} 
(see also \cite{Kuroda}).

For each $f\in k[x_1,x_2]$, 
we denote by $\deg f$ the total degree of $f$, 
and by $\bar{f}$ or $(f)^-$ 
the highest homogeneous part of $f$ 
for the standard grading on $k[x_1,x_2]$. 
The following well-known theorem was first proved by Jung~\cite{Jung} 
when $\ch k=0$ in 1942. 
The general case was proved 
by van der Kulk~\cite{Kulk} in 1953 
(see also the proof of Makar-Limanov~\cite{ML} 
and its modifications by Dicks~\cite{Dicks} 
and Cohn~\cite[Thm.\ 8.5]{Cohn}).

\begin{thm}
\label{thm:autom}
For every $(f_1,f_2)\in \Aut _kk[x_1,x_2]$ with 
$\deg f_1\ge 2$ or $\deg f_2\ge 2$, 
there exist $(i,j)\in \{ (1,2),(2,1)\} $, 
$\alpha \in k^*$ and $l\ge 1$ such that 
$\bar{f_i}=\alpha \bar{f_j}^l$
\end{thm}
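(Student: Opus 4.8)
The plan is to argue along the lines of Makar-Limanov: normalize so that $n_1\le n_2$ (where $n_i:=\deg f_i$), dispose of the small case $n_1=1$ by hand, show that the leading forms $\bar{f_1},\bar{f_2}$ are algebraically dependent, deduce from this that they are scalar multiples of powers of one common homogeneous polynomial, and finally use the Jacobian to pin down the exponents. The last step will be the main difficulty.

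First I would reduce. Interchanging $f_1$ and $f_2$ if necessary (the statement is symmetric in them), assume $n_1\le n_2$; then $n_2\ge 2$, and $n_1\ge 1$ since a constant is not a coordinate. If $n_1=1$, then composing $(f_1,f_2)$ on the left with a suitable affine automorphism of $k[x_1,x_2]$ --- which is harmless here, as it preserves both degrees and transforms the two sides of the desired identity equivariantly --- reduces to $f_1=x_1$; then $k[x_1,f_2]=k[x_1,x_2]$ (together with $\J(x_1,f_2)\in k^*$) forces $f_2=cx_2+p(x_1)$ with $c\in k^*$ and $p\in k[x_1]$ of degree $n_2\ge 2$, so $\bar{f_2}$ equals a nonzero scalar times $x_1^{\deg p}=\bar{f_1}^{\deg p}$. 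Henceforth $n_1\ge 2$, hence $n_2\ge n_1\ge 2$.

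Next I would prove that $\bar{f_1}$ and $\bar{f_2}$ are algebraically dependent over $k$. Were they independent, the distinct monomials $\bar{f_1}^i\bar{f_2}^j$ would stay linearly independent over $k$, so no cancellation could occur and $\deg P(f_1,f_2)$ would equal the $(n_1,n_2)$-weighted degree of $P$ for every $P\in k[y_1,y_2]$; writing $x_1=P(f_1,f_2)$ with $P$ necessarily nonconstant then gives $1=\deg x_1\ge\min(n_1,n_2)=n_1\ge 2$, absurd. Given the dependence, I look at the degree-zero rational function $w:=\bar{f_1}^{n_2}/\bar{f_2}^{n_1}$, which lies in $k(x_1/x_2)$: if $w\notin k$ then $k(\bar{f_1},\bar{f_2})$, which has transcendence degree one over $k$, would be algebraic over $k(w)$, hence $\bar{f_1}$, and then $x_2$, would be algebraic over $k(x_1/x_2)$, impossible since $k(x_1,x_2)=k(x_1/x_2,x_2)$ has transcendence degree two. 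So $w\in k^*$, i.e.\ $\bar{f_1}^{n_2}=\gamma\,\bar{f_2}^{n_1}$ with $\gamma\in k^*$. Comparing factorizations into irreducible homogeneous polynomials in the unique factorization domain $k[x_1,x_2]$, and using that $\gcd(a,b)=1$ where $m:=\gcd(n_1,n_2)$, $a:=n_1/m$, $b:=n_2/m$, I conclude $\bar{f_1}=c_1\psi^a$ and $\bar{f_2}=c_2\psi^b$ for some homogeneous $\psi$ of degree $m$ and some $c_1,c_2\in k^*$, with $1\le a\le b$.

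It remains to prove $a=1$, for then $\bar{f_2}=(c_2c_1^{-b})\bar{f_1}^b$ gives the assertion with $(i,j)=(2,1)$ and $l=b$ (and if $b=1$ too, it reads $\bar{f_2}=\alpha\bar{f_1}$). This is where the real work lies, since algebraic dependence of the leading forms by itself is not enough --- $x_1^2$ and $x_1^3$ are dependent but neither is a power of the other --- so the full strength of $k[f_1,f_2]=k[x_1,x_2]$ must enter. I would invoke the Jacobian $\J(f_1,f_2)$, a nonzero constant because $(f_1,f_2)$ is an automorphism, and observe that $\J(\bar{f_1},\bar{f_2})=\J(c_1\psi^a,c_2\psi^b)$ vanishes identically (it is a scalar multiple of $\J(\psi,\psi)=0$); thus the top part of $\J(f_1,f_2)$ has already cancelled and one must descend to the sub-leading terms. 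Using that $\J(f_1,\cdot)$ is, up to a nonzero scalar, the derivation $\partial/\partial f_2$ in the coordinate system $(f_1,f_2)$, and that $f_1,f_2$ are algebraically independent, a degree estimate for $\J$ applied to $\psi$ and to $f_1-\bar{f_1}$, $f_2-\bar{f_2}$ forces $a=1$. In the case $\ch k=0$ this is clean; in positive characteristic the $p$-th power phenomena --- a derivation annihilates $p$-th powers, and $\psi$ or the $f_i$ may be such --- make the estimate delicate, which is exactly why van der Kulk's general case appeared a decade after Jung's characteristic-zero proof, and for this part I would follow Makar-Limanov's treatment (or the variants of Dicks and Cohn). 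One can also package everything as an induction on $n_1+n_2$, since once $a=1$ is known the elementary automorphism $(f_1,f_2)\mapsto(f_1,f_2-(c_2c_1^{-b})f_1^b)$ strictly lowers $n_1+n_2$; but that induction does not by itself deliver $a=1$, so the Jacobian analysis remains the crux.
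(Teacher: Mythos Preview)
Your route is the classical direct one (Makar-Limanov, later Dicks and Cohn), and it is genuinely different from what the paper does in \S3. There the paper derives Theorem~\ref{thm:autom} \emph{from} Theorem~\ref{thm:Ga}: it passes to the inverse $g_i=\phi^{-1}(x_i)$, observes that some $g_t$ is non-univariate, manufactures a $\Ga$-action fixing $g_t$, and then invokes Lemma~\ref{lem:IC} (proved via Theorem~\ref{thm:Ga} together with the Derksen--Hadas--Makar-Limanov result on Newton polytopes of invariants) to obtain $g_t^{\w(g_t)}=a(x_i-bx_j^l)^m$; a short weighted-degree count with $\w=(\deg f_1,\deg f_2)$ then yields $\bar f_i=b\bar f_j^l$. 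Thus the paper trades the Jacobian analysis entirely for Theorem~\ref{thm:Ga} and \cite{DHM} as black boxes, whereas your approach aims to be self-contained and is precisely the proof the paper cites rather than the one it gives.

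That said, your proposal stops short of a proof at the decisive point. The reductions and the algebraic-dependence argument leading to $\bar f_1=c_1\psi^a$, $\bar f_2=c_2\psi^b$ with $\gcd(a,b)=1$ are correct and cleanly written. But establishing $a=1$ is the entire content of the theorem beyond these preliminaries, and here you offer only a heuristic (``$\J(\bar f_1,\bar f_2)=0$, so pass to sub-leading terms; a degree estimate forces $a=1$'') before deferring to Makar-Limanov. Vanishing of the top Jacobian says merely that cancellation occurs, not how far down it propagates; converting this into $a=1$ requires a genuine structural argument (Makar-Limanov's support estimates, or the Dicks--Cohn peak-reduction), and in positive characteristic the derivation-based bounds you allude to really do need reworking, as you yourself concede. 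As written, the proposal is an accurate outline of the classical proof with its hard core outsourced to a citation.
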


The purpose of this note is to give a new short proof 
of Theorem~\ref{thm:Ga} based on Theorem~\ref{thm:autom} 
(cf.~\S 2). 
We should mention that, 
if $k$ is an infinite field, 
Theorem~\ref{thm:Ga} can be derived from Theorem~\ref{thm:autom} 
by a group-theoretic approach (cf.~\cite{Kam}). 
Our approach is different from this approach, 
and is valid for an arbitrary $k$.

Conversely, 
Theorem~\ref{thm:autom} 
can be derived easily from Theorem~\ref{thm:Ga}. 
This seems known to experts, 
at least when $\ch k=0$ 
(cf.~e.g.~\cite[\S 5.1]{Essen} for related discussion). 
For completeness, 
we also give a proof for this implication (cf.~\S 3).

The author thanks Prof.\ Ryuji Tanimoto 
(Shizuoka University) for discussion.

\section{$\Ga $-action}
\setcounter{equation}{0}

Recall that, 
if $\sigma $ is a nontrivial $\Ga $-action on $A$, 
then $A$ has transcendence degree one over $A^{\sigma }$ 
(cf.~\cite[\S 1.5]{Miyanishi Tata}). 
For each $t\in A^{\sigma }$, 
the map 
$\sigma _t:A\stackrel{\sigma }{\to }A[T]\ni 
f(T)\mapsto f(t)\in A$ 
is an automorphism of the $k$-algebra $A$. 
Actually, 
we have $\sigma _0=\id _A$ by (A1), 
and $\sigma _t\circ \sigma _u=\sigma _{t+u}$ 
for each $t,u\in A^{\sigma }$ by (A2).

Now, 
we derive Theorem~\ref{thm:Ga} 
from Theorem~\ref{thm:autom}. 
For each 
$q=\sum _{i\ge 0}q_iT^i\in k[x_1,x_2][T]\sm \zs $ 
with $q_i\in k[x_1,x_2]$, 
we define the $\Z ^2$-{\it degree} of $q$ by 
$$
\deg _{\Z ^2}q:=\max \{ (i,\deg q_i)\in \Z ^2\mid 
i\ge 0,\ q_i\ne 0\} , 
$$
where $\Z ^2$ is ordered lexicographically, 
i.e., $(a,b)\le (a',b')$ if and only if $a<a'$, 
or $a=a'$ and $b\le b'$. 
Let $\sigma $ be any nontrivial $\G _a$-action on $k[x_1,x_2]$. 
It suffices to find $(f_1,f_2)\in \Aut _kk[x_1,x_2]$ 
for which $\sigma (f_1)$ or $\sigma (f_2)$ 
belongs to $k[x_1,x_2]$. 
Suppose that such an $(f_1,f_2)$ does not exist. 
Choose $(f_1,f_2)\in \Aut _kk[x_1,x_2]$ so that 
$\deg _{\Z ^2}\sigma (f_1)+\deg _{\Z ^2}\sigma (f_2)$ 
is minimal, 
and write $\sigma (f_i)=q_i(T)=\sum _{j=1}^{m_i}q_{i,j}T^j$ 
for $i=1,2$, 
where $q_{i,j}\in k[x_1,x_2]$ with $q_{i,m_i}\ne 0$. 
By supposition, 
we have $m_1,m_2\ge 1$. 
Since $k[x_1,x_2]^{\sigma }\ne k$, 
we may take $g\in k[x_1,x_2]^{\sigma }$ 
with $g\not\in k$. 
Then, 
$\sigma _{g^r}=(q_1(g^r),q_2(g^r))$ 
belongs to $\Aut _kk[x_1,x_2]$ 
for each $r\ge 0$ as mentioned above. 
Since $\deg g\ge 1$, 
there exists $r_0>0$ such that, 
for each $r\ge r_0$ and $i=1,2$, 
we have 
$(q_i(g^r))^-=\bar{q}_{i,m_i}\bar{g}^{rm_i}$ 
and $\deg q_i(g^r)\ge 2$. 
By Theorem~\ref{thm:autom}, 
for each $r\ge r_0$, 
there exist $(i,j)\in \{ (1,2),(2,1)\} $, 
$\alpha \in k^*$ and $l\ge 1$ such that 
$\bar{q}_{i,m_i}\bar{g}^{rm_i}
=\alpha (\bar{q}_{j,m_j}\bar{g}^{rm_j})^l$. 
This equality implies that 
\begin{equation}\label{eq:initial eq}
r(m_i-lm_j)\deg g=l\deg q_{j,m_j}-\deg q_{i,m_i}. 
\end{equation}
We note that $(i,j)$, $\alpha $ and $l$ above depend on $r$. 
By (\ref{eq:initial eq}), 
we see that $m_i=lm_j$ holds for sufficiently large $r$. 
Take such an $r$. 
Then, 
we have 
$\bar{q}_{i,m_i}=\alpha \bar{q}_{j,m_j}^l$, 
and hence $\deg (q_{i,m_i}-\alpha q_{j,m_j}^l)<\deg q_{i,m_i}$. 
Thus, the $\Z ^2$-degree of 
\begin{align*}
\sigma (f_i-\alpha f_j^l)
&=q_i(T)-\alpha q_j(T)^l \\
&=(q_{i,m_i}-\alpha q_{j,m_j}^l)T^{m_i}+
(\text{terms of lower degree in }T)
\end{align*}
is strictly less than that of $\sigma (f_i)$. 
Since $(f_i-\alpha f_j^l, f_j)$ 
belongs to $\Aut _kk[x_1,x_2]$, 
this contradicts the minimality of 
$\deg _{\Z ^2}\sigma (f_1)+\deg _{\Z ^2}\sigma (f_2)$, 
completing the proof.

\section{Automorphism Theorem}
\setcounter{equation}{0}

We derive Theorem~\ref{thm:autom} 
from Theorem~\ref{thm:Ga}. 
Let $f=\sum _{i_1,i_2\ge 0}u_{i_1,i_2}x_1^{i_1}x_2^{i_2}$ be 
an element of $k[x_1,x_2]\sm \zs $, 
where $u_{i_1,i_2}\in k$. 
For each $\w =(w_1,w_2)\in \R ^2$, 
we define 
$$
\degw f:=\max \{ i_1w_1+i_2w_2\mid 
i_1,i_2\ge 0,\ u_{i_1,i_2}\ne 0\} 
\text{ and }
f^{\w }:={\sum }'u_{i_1,i_2}x_1^{i_1}x_2^{i_2}, 
$$
where the sum $\sum '$ is taken over $i_1,i_2\ge 0$ 
with $i_1w_1+i_2w_2=\degw f$. 
We say that $f$ is $\w $-{\it homogeneous} if $f^{\w }=f$, 
and {\it non-univariate} if $f\not\in k[x_1]\cup k[x_2]$. 
We define  
$$
\w (f):=(\deg _{(0,1)}f,\deg _{(1,0)}f). 
$$
We remark that $f^{\w (f)}$ is non-univariate if 
$f$ is non-univariate.

The following lemma is a consequence of Theorem~\ref{thm:Ga}.

\begin{lem}\label{lem:IC}
If $\sigma $ is a nontrivial $\Ga $-action on $k[x_1,x_2]$, 
and $f\in k[x_1,x_2]^{\sigma }$ is non-univariate, 
then there exist $a,b\in k^*$, 
$(i,j)\in \{ (1,2),(2,1)\} $ 
and $l,m\ge 1$ such that 
\begin{equation}\label{eq:IC}
f^{\w (f)}=a(x_i-bx_j^l)^m. 
\end{equation}
\end{lem}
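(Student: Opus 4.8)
The plan is to pass to $\w (f)$-leading forms and apply Theorem~\ref{thm:Ga} a second time. Set $\w :=\w (f)$; since $f$ is non-univariate, both entries of $\w $ are positive, and $f^{\w }$ is non-univariate by the remark preceding the lemma. Write $\sigma (x_i)=\sum _{j\ge 0}q_{i,j}T^j$ with $q_{i,j}\in k[x_1,x_2]$ and $q_{i,0}=x_i$. I would extend the $\w $-grading to $k[x_1,x_2][T]$ by assigning $T$ the weight $c:=-\max \{(\degw q_{i,j}-\degw x_i)/j\mid i\in \{1,2\},\ j\ge 1,\ q_{i,j}\ne 0\}$, define $\sigma ^{\w }(x_i)$ to be the $\w $-leading part of $\sigma (x_i)$ for this grading, and extend $\sigma ^{\w }$ to a $k$-algebra homomorphism $k[x_1,x_2]\to k[x_1,x_2][T]$. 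The point of this $c$ is that $\degw q_{i,j}+jc\le \degw x_i$ for every $j$, with equality at $j=0$ for each $i$ and at some pair $(i_0,j_0)$ with $j_0\ge 1$; hence the $T^0$-coefficient of $\sigma ^{\w }(x_i)$ is $x_i$, so (A1) holds, while $\sigma ^{\w }(x_{i_0})\notin k[x_1,x_2]$, so $\sigma ^{\w }$ is nontrivial. That $\sigma ^{\w }$ also satisfies (A2) is the standard fact that the associated graded of a $\Ga $-action with respect to a compatible filtration is again a $\Ga $-action. Applying the same passage to leading forms to the identity $\sigma (f)=f$ yields $f^{\w }\in k[x_1,x_2]^{\sigma ^{\w }}$.

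Next, by Theorem~\ref{thm:Ga} applied to $\sigma ^{\w }$, there is a coordinate $h$ with $k[x_1,x_2]^{\sigma ^{\w }}=k[h]$. Since $\sigma ^{\w }$ is $\w $-homogeneous, its invariant ring is a $\w $-graded subring of $k[x_1,x_2]$, so $h$ can be chosen $\w $-homogeneous; moreover $\degw h>0$, because $k[x_1,x_2]$ has transcendence degree one over $k[x_1,x_2]^{\sigma ^{\w }}$ and hence $h\notin k$. Thus $f^{\w }$, being a $\w $-homogeneous element of $k[h]$, equals $a\,h^m$ for some $a\in k^*$ and $m\ge 1$; and $h$ is non-univariate because $f^{\w }$ is.

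It then remains to identify the non-univariate $\w $-homogeneous coordinates. Put $\w =(w_1,w_2)$ and $d:=\gcd (w_1,w_2)$. A coordinate is prime (its residue ring is a polynomial ring in one variable), so $h$ is divisible by neither $x_1$ nor $x_2$; combined with $\w $-homogeneity, this forces $h$ to be a binary form, of some degree $e\ge 1$, in $x_1^{w_2/d}$ and $x_2^{w_1/d}$. Since $k[x_1,x_2]/(h)$ remains a domain after tensoring with $\bar k$, the polynomial $h$ is geometrically irreducible; as a binary form of degree $\ge 2$ is not, we get $e=1$, and non-univariateness forces both coefficients to be nonzero, so $h=a'(x_1^{w_2/d}-bx_2^{w_1/d})$ with $a',b\in k^*$. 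If both $w_1/d\ge 2$ and $w_2/d\ge 2$, then $h$ lies in the square of the maximal ideal of the origin, so $\Spec k[x_1,x_2]/(h)$ is singular at the origin — impossible, since it is isomorphic to $\A ^1$. Hence $w_1/d=1$ or $w_2/d=1$, so $h=a'(x_i-bx_j^l)$ for a suitable $(i,j)\in \{(1,2),(2,1)\}$ and $l\ge 1$, and therefore $f^{\w }=a(a')^m(x_i-bx_j^l)^m$, which is~(\ref{eq:IC}).

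I expect the construction in the first paragraph to be the main obstacle: one has to choose the auxiliary weight of $T$ correctly and then check that the cocycle identity (A2) is preserved under passage to $\w $-leading forms — in positive characteristic this requires the usual care with iterated substitutions. Everything after that is routine bookkeeping with the $\w $-grading and the local structure of the plane curve $\{h=0\}$.
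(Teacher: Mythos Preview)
Your argument is correct and follows essentially the same route as the paper: the construction in your first paragraph is exactly the homogenization of a $\Ga$-action carried out in Derksen--Hadas--Makar-Limanov~\cite[Prop.~2.2]{DHM}, which the paper simply cites, and your final paragraph supplies the details behind the paper's one-line claim that a non-univariate $\w(f)$-homogeneous coordinate must have the form $\beta x_i+\gamma x_j^l$. The only substantive difference is packaging: the paper outsources the degeneration step to \cite{DHM}, whereas you reprove it, and you argue homogeneity of $h$ via gradedness of the invariant ring rather than directly from $f^{\w}\in k[h]$.
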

\begin{proof}
Since $f$ is non-univariate, 
so is $f^{\w (f)}$ as remarked. 
By Derksen--Hadas--Makar-Limanov~\cite[Prop.\ 2.2]{DHM}, 
there exists a nontrivial $\Ga $-action $\tau $ on $k[x_1,x_2]$ 
such that $f^{\w (f)}$ belongs to $k[x_1,x_2]^{\tau }$. 
By Theorem~\ref{thm:Ga}, 
$k[x_1,x_2]^{\tau }=k[h]$ 
holds for some coordinate $h$ of $k[x_1,x_2]$. 
We may assume that $h$ has no constant term. 
Then, 
since $f^{\w (f)}$ belongs to $k[h]\sm k$ 
and $f^{\w (f)}$ is $\w (f)$-homogeneous, 
we see that $h$ is $\w (f)$-homogeneous, 
and $f^{\w (f)}=\alpha h^m$ 
for some $\alpha \in k^*$ and $m\ge 1$. 
This implies that $h$ is non-univariate. 
Since $h$ is a $\w (f)$-homogeneous coordinate, 
$h$ must have the form $\beta x_i+\gamma x_j^l$ 
for some $\beta, \gamma \in k^*$ and $l\ge 1$. 
Therefore, 
$f^{\w (f)}$ is written as in (\ref{eq:IC}). 
\end{proof}

Now, 
we prove Theorem~\ref{thm:autom}. 
Take $\phi =(f_1,f_2)\in \Aut _kk[x_1,x_2]$. 
Set $w_i:=\deg f_i$ 
and $g_i:=\phi ^{-1}(x_i)$ for $i=1,2$. 
Assume that $w_1\ge 2$ or $w_2\ge 2$. 
Then, 
there exists $t\in \{ 1,2\} $ 
such that $g_t$ is non-univariate. 
Note that a nontrivial 
$\Ga $-action $\sigma $ on $k[x_1,x_2]$ 
is defined by 
$\sigma (g_t)=g_t$ and $\sigma (g_u)=g_u+T$, 
where $u\ne t$. 
Since $g_t$ belongs to $k[x_1,x_2]^{\sigma }$, 
we may write $g_t^{\w (g_t)}$ as in (\ref{eq:IC}) 
by Lemma~\ref{lem:IC}. 
Set $\w :=(w_1,w_2)$. 
Then, 
we have 
$$
\degw g_t=m\max \{ w_i,lw_j\} \ge \max \{ w_1,w_2\} \ge 2
>1=\deg x_t=\deg \phi (g_t). 
$$
This implies that $a(\bar{f}_i-b\bar{f}_j^l)^m=0$. 
Therefore, 
we get $\bar{f}_i=b\bar{f}_j^l$, 
proving Theorem~\ref{thm:autom}.

\noindent
Department of Mathematics and Information Sciences \\ 
Tokyo Metropolitan University \\ 
1-1  Minami-Osawa, Hachioji, 
Tokyo 192-0397, Japan\\
kuroda@tmu.ac.jp

\end{document}